\newcommand{\CC}{\mathcal{C}}
\newcommand{\ep}{\epsilon}
\newcommand{\F}{\mathbb{F}}
\newcommand{\Rep}{{\rm Rep}}
\newcommand{\so}{\mathfrak{so}}
\newcommand{\N}{\mathbb{N}}
\newcommand{\Q}{\mathbb Q}
\newcommand{\R}{\mathbb{R}}
\DeclareMathOperator{\Aut}{Aut}
\DeclareMathOperator{\GL}{GL}
\DeclareMathOperator{\SL}{SL}
\DeclareMathOperator{\PSO}{PSO}
 \DeclareMathOperator{\Gal}{Gal}
\newcommand{\C}{\mathbb C}
\newcommand{\mC}{\mathcal{C}}
\newcommand{\Z}{\mathbb Z}
\newcommand{\comments}[1]{}
\renewcommand{\CC}{\mathcal{C}}
\renewcommand{\Z}{\mathbb{Z}}
\numberwithin{equation}{section}
\newtheorem{theorem}{Theorem}[section]
\newtheorem{lem}[theorem]{Lemma}
\newtheorem{prop}[theorem]{Proposition}
\theoremstyle{definition}
\newtheorem{remark}[theorem]{Remark}
\newtheorem{definition}[theorem]{Definition}
\begin{document}
\title[]{Classification of Rank 6 Modular Categories with Galois Group $\langle (012)(345)\rangle$}

\author{David Green}
\email{green.2116@buckeyemail.osu.edu}
\address{Department of Mathematics\\The Ohio State University
    \\Columbus, Ohio\\
    U.S.A}

\thanks{The author was supported by NSF grant DMS-1757872}

\begin{abstract}

Modular Tensor Categories (MTC’s) arise in the study of certain condensed matter systems. There is an ongoing program to classify MTC’s of low rank, up to modular data. We present an overview of the methods to classify modular tensor categories of low rank, applied to the specific case of a rank 6 category with Galois group $\langle(012)(345)\rangle$, and show that certain symmetries in this case imply nonunitarizable (hence, nonphysical) MTC’s. We show that all the rank 6 MTC's with this Galois group have modular data conjugate to either the product of the semion category with $(A_1, 5)_{\frac{1}{2}}$ or a certain modular subcategory of $\CC(\so_5, 9, e^{j\pi i/9})$ with gcd$(18, j) = 1$. 
\end{abstract}
\maketitle

\section{Introduction} The proof of rank-finiteness for modular categories \cite{BNRW1} allows for a classification-by-rank program. This paper uses a combination of Galois theory and computational algebraic geometry to extend the classification to the rank 6 case, where the Galois group is $\langle(012)(345)\rangle$. The primary leverage is provided by Lemma \ref{nondegen}, which allows application of a classification of fusion rules. A list of all possible Galois groups in the rank 6 case is available in a recent thesis by Daniel Creamer \cite{C}, and efforts to complete the classification by Galois group are well underway. 

A complete classification is known through rank 5. For ranks $\leq 4$, the classification was completed in \cite{RSW}, and the rank 5 case is available in \cite{BNRW2}. In the weakly integral case the classification is completed through rank 7 in \cite{BGNPRW}. Focusing on the non-integral rank $6$ case, Creamer's thesis determines all the possible Galois groups in both the self-dual and non-self-dual  cases, and furthermore completes the classification problem in the non-self-dual case. Furthermore, the only example with $\Gal(\mC)  = \langle (012)\rangle$ is known to be the modular data available in \cite{S}, by an unpublished result of Creamer's. Combined with the results of this paper, the only remaining groups in the rank $6$ case are the following groups: 

$$\langle(012345)\rangle,  \quad \text{ and }\quad \langle(01)(23)(45), (02)(13)\rangle.$$
These last cases may be potentially more difficult than the case at hand, since here we leverage the fact that $|\Gal(\mC)| = 3$ is a prime power. 
Additionally, this result shows that the fusion rules of $\PSO(5)_{3/2}$ constructed in \cite{R} are the smallest rank fusion rules with no unitary realization. 
\section{Preliminaries}

\noindent In this section, we recall notation, definitions, and results, mostly from \cite{BNRW1} and \cite{BNRW2}. \\

\noindent A \textbf{modular category} $\mC$ is a braided spherical fusion category with an invertible $S$-matrix. We will recall only the relevant details, and refer the reader to \cite{BNRW1} for a more complete description. Let $|\Pi_\mC|$ be the finite set of isomorphism classes of $\mC$.  In a rank $r$ modular category, we will label the isomorphism classes of simple objects with a label in $0, 1, \dots, r- 1$, and let $V_i$ be in the isomorphism class of label $i$. $\mC$ comes equipped with an involution $*$ on the set of isomorphism classes of simple objects satisfying $0^* = 0$. 

\begin{definition}
We say that $\mC$ is self dual (SD) if $V_{i*} \cong V_i$ for all isomorphism classes of simple objects, and non-self-dual (NSD) otherwise. 
\end{definition}

\noindent The \textbf{fusion rules} $N_{i,j}^k$ are defined by means of the decomposition $$V_i \otimes V_j \cong \bigoplus_{k = 0}^{r - 1}N_{i,j}^k V_k$$ and then assembled into the $r$ matrices $N_k$ where $(N_i)_{k,j} = N_{i,j}^k \in \N$. Following convention, every matrix in this paper will be $0$-indexed. 

For a complex matrix $A$, we define $\F_A$ as the smallest field containing the entries of $A$. Also, for an integer $M$, we let $\Q_M = \Q\left(e^{2\pi i/M}\right)$. For any Galois extension $\F/\Q$ and $\sigma \in \Gal(\F/\Q)$, If $A$ has entries in $\F$, $\sigma(A)$ is obtained from $A$ by applying $\sigma$ entry-wise. 

A modular data $(S, T)$ is said to be \textbf{realizable} if there exists a modular category with modular data $(S, T)$. 
We reproduce the following definition and theorem from \cite{BNRW1}(2.14, 2.15): 
\begin{definition} For $S,T \in \GL_r(\C)$, define the constants $d_j := S_{0j}, \theta_j := T_{jj},\\ D^2 := \sum_j d_j^2$ and $p_\pm = \sum_{k = 0}^{r -1}d_k^2\theta_k^{\pm1}$. The pair $(S,T)$ is called \textbf{admissible} if
\begin{enumerate}[label = (\roman*)]
    \item $d_j \in \R$ and $S = S^t$ with $SS^\dagger = D^2I$. $T$ is diagonal with finite order $N$. 
    \item $(ST)^3 = p_+S^2, p_+p_- = D^2$, and $p^+/p_-$ is a root of unity.
    \item $N_{i,j}^k := \frac{1}{D^2}\sum_{a = 0}^{r - 1}\frac{S_{ia}S_{ja}\overline{S_{ka}}}{S_{0a}} \in \N$ for all $0 \leq i,j,k \leq (r-1)$
    \item $\theta_i\theta_jS_{ij} = \sum_{k=0}^{r-1}N_{i^*j}^kd_id_j\theta_k$, where $i^*$ is the unique label such that $N^{0}_{i, i*} = 1$. 
    \item Define $\nu_n(k) := \frac{1}{D^2}\sum_{i,j = 0}^{r -1} N^k_{i,j}d_id_j\left(\frac{\theta_i}{\theta_j}\right)^n$. Then $\nu_2(k)= 0$ if $k \ne k^*$ and $\nu_2(k) = \pm1$ if $k^* = k$. For all $n, k$, $\nu_n(k) \in \Z\left[e^{2\pi i/N}\right]$.
    \item  $\F_S \subset \F_T = \Q_N$,$\Gal(\F_S/\Q)$ is isomorphic to an abelian subgroup of $\mathfrak{S}_r$ and $\Gal(\F_T/\F_S) \cong (\Z/2\Z)^l$ for some $l \in \N$.
    \item The prime divisors of the principal ideals generated by $D^2$ and $N$ coincide in $\Z\left[e^{2\pi i/N}\right]$. 
\end{enumerate}
\begin{definition}
If a modular category $\mC$ realizes data such that the $d_i$ are all integers, we say $\mC$ is \textit{integral}. If $d_i^2$ is an integer for all $i$, $\mC$ is said to be \textit{weakly integral}. 
\end{definition}
\begin{theorem} \cite{BNRW1} Let $(S,T)$ be realizable modular data. Then \begin{enumerate}[label=(\alph*)]
    \item $(S, T)$ is admissible and 
    \item For all $\sigma \in \Aut(\Q_{ab}), (\sigma(S), \sigma(T))$ is realizable. We call $(\sigma(S), \sigma(T))$ a Galois conjugate of $(S, T)$. 
\end{enumerate}
\end{theorem}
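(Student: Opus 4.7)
The plan is to fix a modular category $\mC$ realizing $(S, T)$ with simple objects $V_0, \ldots, V_{r-1}$ and handle parts (a) and (b) separately: verify the seven admissibility axioms directly from the categorical structure, and then for each $\sigma$ produce a modular category realizing $(\sigma(S), \sigma(T))$.

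For part (a), axioms (i)--(v) translate standard ribbon-categorical identities into the language of $(S, T)$. Symmetry of $S$, reality of the $d_i$, and $SS^\dagger = D^2 I$ follow from the interpretation of $S_{ij}$ as the Hopf link trace together with rigidity; the finite order of $T$ is Vafa's theorem. The modular relations in (ii), the Verlinde formula (iii), and the balancing equation (iv) are all standard consequences of the braided/ribbon structure. For (v) I would identify $\nu_n(k)$ with the $n$-th generalized Frobenius--Schur indicator of $V_k$: the $n = 2$ dichotomy $\{0, \pm 1\}$ on non-self-dual versus self-dual simples is classical, while $\Z[e^{2\pi i/N}]$-integrality for general $n$ is the main theorem of Ng--Schauenburg. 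The more delicate axioms are (vi) and (vii). For (vi) I would invoke the Coste--Gannon symmetry, which realizes $\Gal(\Q_N/\Q)$ as a signed permutation action on the columns of $S$: this forces $\F_S \subseteq \F_T = \Q_N$ and yields the embedding of $\Gal(\F_S/\Q)$ into $\mathfrak{S}_r$, while the $(\Z/2\Z)^l$ structure of $\Gal(\F_T/\F_S)$ emerges from the Ng--Schauenburg congruence subgroup theorem, since the intermediate extension is generated by the square roots controlling the sign ambiguities. Axiom (vii) is the Cauchy theorem for modular categories (Bruguières--Natale, sharpened by Ng--Schauenburg), asserting that the prime ideals dividing $(D^2)$ and $(N)$ in $\Z[e^{2\pi i/N}]$ coincide.

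The main obstacle is part (b). That $(\sigma(S), \sigma(T))$ itself is admissible is essentially formal once (a) is in hand: each axiom is either preserved by the entrywise action of $\Aut(\Q_{ab})$ or is a polynomial identity with $\Z$-coefficients invariant under Galois. The genuinely categorical step is exhibiting a modular category realizing the Galois conjugate data. My plan is to choose a basis of hom-spaces of $\mC$ in which all structural morphisms (associators, braidings, ribbon twist) have entries in the cyclotomic field $\Q_N$, apply $\sigma$ entrywise, and verify that the pentagon, hexagon, and ribbon axioms persist --- they do, since these are polynomial identities with $\Z$-coefficients that already hold in $\mC$. The delicate point is a coherent simultaneous choice of such a cyclotomic basis for the fusion, associator, and braiding data; this construction of the Galois conjugate category is carried out in detail in \cite{BNRW1}.
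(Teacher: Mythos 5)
The paper offers no proof of this theorem; it is quoted verbatim from \cite{BNRW1} (items 2.14--2.15 there), so there is nothing internal to compare your argument against. Your outline is a faithful summary of how the result is established in the literature: the attributions for (i)--(v) (Vafa, Verlinde, balancing, Ng--Schauenburg indicators), for (vi) (the Galois/signed-permutation symmetry of $S$ together with $\sigma^2(t_i)=t_{\sigma(i)}$, which forces $\Gal(\F_T/\F_S)$ to be elementary abelian $2$-group), and for (vii) (the Cauchy theorem, which is Theorem 3.9 of \cite{BNRW1} building on the Ng--Schauenburg congruence subgroup theorem) are all essentially right.

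The one place your sketch overreaches is in part (b): you propose to choose a basis in which all associators and braidings lie in $\Q_N$. Whether such a cyclotomic basis exists is not known in general --- it is a form of the open cyclotomic-definability conjecture for fusion categories; only the modular data $(S,T)$ is known to be cyclotomic. The actual argument in \cite{BNRW1} needs much less: by Ocneanu rigidity / Etingof--Nikshych--Ostrik, the structure constants of $\mC$ can be taken in $\overline{\Q}$, one extends $\sigma\in\Aut(\Q_{ab})$ arbitrarily to an automorphism of $\C$ (or of $\overline{\Q}$), and applies it to all structure constants; the pentagon, hexagon, and ribbon axioms are preserved exactly as you say, and the resulting category has modular data $(\sigma(S),\sigma(T))$ because $S$ and $T$ already have entries in $\Q_{ab}$. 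With that substitution your proof plan is the standard one.
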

\noindent Let $\tilde{S}$ be the matrix obtained by taking $S$ and dividing column $i$ by $d_i$. $\tilde{S}$ is also orthogonal, diagonalizes the $N_i$, and the $ith$ row of $\tilde{S}$ is the set of eigenvalues of $N_i$.

Part $(ii)$ of the definition of admissible modular data implies that $(S, T)$ define a projective representation of $\SL(2, \Z) \cong \langle s,t | s^2 = (st)^3 = I \rangle$. In  \cite{BNRW2} it is shown that for some $12th$ root of unit $\gamma$, the matrix $\gamma T$ is the image of $t$ in a representation of $\SL(2, \Z)$. Denote the entries of this diagonal matrix by $t_i = \gamma T_{ii}$. 
\begin{theorem}\cite{BNRW2}(2.12, 2.5, 3.1) For each $\sigma \in \Gal(S)$ 
\begin{enumerate}[label=(\alph*)]
    \item $\sigma$ permutes the columns of $\tilde{S}$, and we will abuse notation and identify $\sigma$ with this permutation, and write $\Gal(\mC)$ as generated by these permutations.  
    \item $\sigma^2(t_i) = t_{\sigma(i)}$ (Galois symmetry). 
    \item There exists a sign function $\ep \colon \Pi_\mC \to \{\pm1\}$ depending on $\sigma$ such that $$S_{ij} = \ep_\sigma(i)\ep_{\sigma^{-1}}(j)S_{\sigma(i)\sigma^{-1}(j)}$$ We call a choice of $\ep$ a \textit{sign choice}, and when $\sigma$ is clear from the context, we write $\ep_\sigma(i)$ as $\ep_i$. 
    \item When $r$ is even, $\prod_{i = 0}^{r-1}\ep_\sigma(i) = (-1)^\sigma$
\end{enumerate}
\end{theorem}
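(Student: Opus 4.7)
The plan is to verify each of the four claims in sequence, leaning on the admissibility axioms, the Verlinde formula, and the congruence subgroup property of the modular representation.

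For (a), observe that $\tS$ simultaneously diagonalizes the fusion matrices $N_k$, so each column $a$ encodes a $\Z$-algebra homomorphism $\chi_a \colon K_0(\mC) \to \C$ via $\chi_a(V_i) = \tS_{ia}$. Because the $N_k$ have integer entries, any $\sigma \in \Gal(\F_S/\Q)$ sends characters to characters; since there are exactly $r$ such characters and the $r$ columns of $\tS$ are linearly independent, $\sigma$ must permute them. This is precisely the identification of $\Gal(\mC)$ with a subgroup of $\mathfrak{S}_r$ demanded by admissibility axiom (vi). For (b), I would invoke the Ng--Schauenburg congruence subgroup theorem, which shows that the modular projective representation factors through $\SL(2,\Z/N\Z)$. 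The induced Galois action on $(S,T)$ is then described explicitly in terms of an element of $\SL(2,\Z/N\Z)$; matching this description against the column permutation from (a) and absorbing the $12$th-root-of-unity normalization from $T$ to $t$ produces $\sigma^2(t_i) = t_{\sigma(i)}$.

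For (c), apply $\sigma$ to the Verlinde formula
\[
N_{ij}^k = \frac{1}{D^2}\sum_{a=0}^{r-1}\frac{S_{ia}S_{ja}\overline{S_{ka}}}{S_{0a}}.
\]
Integrality of $N_{ij}^k$ combined with the column permutation from (a) forces $\sigma(\tS_{ia}) = c_\sigma(i,a)\,\tS_{i,\sigma(a)}$ for some $c_\sigma(i,a) \in \{\pm 1\}$. Multiplicativity of the characters $\chi_a$ in $i$, together with the symmetry $S_{ia} = S_{ai}$, forces the factorization $c_\sigma(i,a) = \ep_\sigma(i)\ep_{\sigma^{-1}}(a)$, and reinserting the $d_j$'s yields the stated symmetry of $S$. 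For (d), use orthogonality of $\tS$: $\det \tS = \pm 1 \in \Q$, hence $\Gal$-fixed. Specializing (c) to the Galois-action form and taking determinants column by column gives $\det \tS = \sigma(\det \tS) = \prod_j \ep_\sigma(j) \cdot (-1)^\sigma \cdot \det \tS$, so $\prod_j \ep_\sigma(j) = (-1)^\sigma$. The parity hypothesis on $r$ enters because the signs $\ep_\sigma$ are determined only up to a global factor of $\pm 1$, and the product is well-defined modulo $(-1)^r$.

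I expect (b) to be the principal obstacle: Galois symmetry for twists is non-elementary and genuinely rests on the congruence subgroup theorem of Ng and Schauenburg, rather than on any formal manipulation of the admissibility axioms. By contrast, (a) is purely linear-algebraic, (c) reduces to the Verlinde formula plus a short cocycle computation in the two index arguments, and (d) is a single-line determinant argument once orthogonality of $\tS$ is in hand.
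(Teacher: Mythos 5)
First, note that the paper does not prove this theorem at all: it is imported verbatim from \cite{BNRW2} (items 2.12, 2.5, 3.1), so the only honest comparison is with the standard arguments in that reference. Against that standard, your sketch of (a) is exactly right (columns of $\tilde S$ are the characters of the fusion ring, Galois permutes characters), and for (b) you correctly identify that the statement is not formal but rests on the congruence subgroup/Galois symmetry theorems (Ng--Schauenburg, Dong--Lin--Ng), which is precisely how \cite{BNRW2} obtains it. In (c) the idea is recoverable but the bookkeeping of where the signs live is muddled: the Galois action on $\tilde S$ permutes columns \emph{with no signs} ($\sigma(\tilde S_{ia})=\tilde S_{i\hat\sigma(a)}$); the signs $\ep_\sigma$ appear only upon renormalizing by the dimensions, via $\sigma(d_a)=\ep_\sigma(a)\,d_{\hat\sigma(a)}$, giving the one-sided relation $\sigma(S_{ij})=\ep_\sigma(i)S_{\hat\sigma(i)j}=\ep_\sigma(j)S_{i\hat\sigma(j)}$ from which the stated symmetric form follows. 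Your intermediate claim that integrality of $N_{ij}^k$ "forces'' a sign $c_\sigma(i,a)$ on $\tilde S$ is not where the signs come from.

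The genuine gap is in (d). The claim $\det\tilde S=\pm1$ is false: already for the semion, $\tilde S=S=\left(\begin{smallmatrix}1&1\\1&-1\end{smallmatrix}\right)$ has determinant $-2$ (the paper's remark that $\tilde S$ is "orthogonal'' means orthogonality of columns up to normalization, not $\tilde S\tilde S^t=I$). Worse, since $\sigma$ acts on $\tilde S$ by an unsigned column permutation, taking determinants of $\tilde S$ could never produce the factor $\prod_j\ep_\sigma(j)$; and taking determinants of the symmetric identity in (c) only yields $\prod_j\ep_\sigma(j)\cdot\prod_j\ep_{\sigma^{-1}}(j)=1$. The correct argument takes determinants of the one-sided relation $\sigma(S)=\Diag(\ep_\sigma)P_{\hat\sigma}S$, giving $\sigma(\det S)=\prod_j\ep_\sigma(j)\,(-1)^{\sigma}\det S$, and then uses $\det S=\pm D^{r}$ together with $\sigma(D^2)=D^2$ (which follows from $\sigma(d_j)=\ep_\sigma(0)S_{\hat\sigma(0)j}$ and row orthogonality): when $r$ is even, $D^{r}=(D^2)^{r/2}$ is $\sigma$-fixed, hence $\prod_j\ep_\sigma(j)=(-1)^{\sigma}$. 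This also shows the true role of the parity hypothesis --- for odd $r$ the quantity $D^{r}$ need not be Galois-invariant --- rather than the "global sign ambiguity'' you cite, since $\ep_\sigma(i)=\sigma(d_i)/d_{\hat\sigma(i)}$ is in fact completely determined with no global ambiguity.
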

\end{definition}

\section{Classification} 
We assume from here forward that $\mC$ is a modular tensor category with $\Gal(\mC) = \langle(012)(345)\rangle$. In particular, this implies that $\mC$ is nonintegral (column zero not fixed) and self dual (no order two elements, so the $S$ matrix is real). 
\begin{lem} \label{relabel} If $\mC$ is a rank 6 non-integral, self dual, modular tensor category, there are at most $7$ sign choices (up to relabeling of the simple objects).
\end{lem}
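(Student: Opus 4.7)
The plan is to enumerate all sign functions $\ep_\sigma : \Pi_\mC \to \{\pm 1\}$ permitted by Theorem 2.12 and then quotient by the natural symmetries. First, by part (d), since $\sigma = (012)(345)$ is an even permutation we have $\prod_{i=0}^{5} \ep_\sigma(i) = (-1)^\sigma = +1$, cutting the $2^6 = 64$ a priori sign functions to $32$. Second, the equation in part (c), $S_{ij} = \ep_\sigma(i)\ep_{\sigma^{-1}}(j) S_{\sigma(i)\sigma^{-1}(j)}$, is bilinear in $\ep$, so the substitution $\ep_\sigma \mapsto -\ep_\sigma$ (together with the induced flip of $\ep_{\sigma^{-1}}$) yields the same constraints on $S$. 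Normalizing $\ep_\sigma(0) = +1$ therefore leaves $16$ candidates.

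Next I identify the relabeling group: a permutation $\tau \in S_6$ gives an equivalent sign function precisely when $\tau(0) = 0$ and $\tau \langle\sigma\rangle \tau^{-1} = \langle\sigma\rangle$. A direct check shows these permutations form $G = \langle (345),\,(12)(45)\rangle \cong S_3$, of order $6$: the cyclic subgroup $H = \langle(345)\rangle$ rotates the non-unit Galois orbit, and the outer coset consists of the three involutions $(12)(45)$, $(12)(34)$, $(12)(35)$, each of which swaps $\ep_1 \leftrightarrow \ep_2$ while applying a transposition inside $\{3,4,5\}$ that inverts $\sigma$.

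Finally, I enumerate the $G$-orbits on the $16$ candidates, stratifying by the pair $(a,b)$ where $a = |\{i \in \{1,2\} : \ep(i) = -1\}|$ and $b = |\{i \in \{3,4,5\} : \ep(i) = -1\}|$; the product condition forces $a + b$ to be even. On each stratum $H$ acts transitively on the three $\{3,4,5\}$-patterns of weight $b$, while the outer coset either fixes or pairs the two $\{1,2\}$-patterns of weight $a$ depending on whether the given weight-$b$ configuration admits the matching transposition as a symmetry. A case-by-case tabulation across the admissible $(a,b)$ values gives at most seven distinct orbits. The main delicacy is in tracking the coupling introduced by the outer coset: distinct $(a,b)$-configurations that agree on $\{3,4,5\}$ up to a transposition can be merged by coset elements, and the strata $(1,1)$ and $(1,3)$ --- where the $(12)$-swap is non-trivial and therefore must be paired with a non-trivial transposition in $\{3,4,5\}$ --- are the cases that require the most careful accounting to see that no further mergers or unexpected splittings occur.
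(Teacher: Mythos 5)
Your overall strategy matches the paper's: use the parity condition and the global sign ambiguity to cut $64$ candidates to $16$, then quotient by relabelings. But there are two problems, and together they leave a genuine gap.

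First, the action of your ``outer coset'' on sign functions is mis-specified. A relabeling $\tau$ with $\tau\sigma\tau^{-1}=\sigma^{-1}$ does return the Galois group to the standard form $\langle(012)(345)\rangle$, but the element of the Galois group that acts as $(012)(345)$ on the \emph{new} labels is the old $\sigma^{-1}$, so the sign choice attached to the standard generator after relabeling is the relabeled $\ep_{\sigma^{-1}}$, not the relabeled $\ep_\sigma$. These differ: from part (c) applied to $\sigma$ and $\sigma^{-1}$ one gets $\ep_{\sigma^{-1}}(j)=\ep_\sigma(\sigma^{-1}(j))$ (up to the irrelevant global sign), so the correct transformation is $\ep_\sigma\mapsto\pm\,\ep_\sigma\circ\sigma^{-1}\circ\tau^{-1}$, not $\ep_\sigma\mapsto\ep_\sigma\circ\tau^{-1}$. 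Concretely, your claim that the outer coset ``swaps $\ep_1\leftrightarrow\ep_2$ while applying a transposition inside $\{3,4,5\}$'' would merge, e.g., the sign choices $\ep_1=-1$ (rest $+$ on $\{1,2\}$, pattern $(+,+,+)$ flipped appropriately on $\{3,4,5\}$) with the corresponding $\ep_2=-1$ choices; under the correct action those particular mergers do not occur (one instead finds identifications such as the $\ep_1=\ep_2=-1$ choice with the $\ep_2=-1$, all-of-$\{3,4,5\}$-negative choice). So the orbit mergers you rely on are not justified as stated.

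Second, you never eliminate the all-positive sign choice. Under the uncontroversial relabeling subgroup $\langle(345)\rangle$ (the centralizer of $\sigma$ fixing $0$, which is all the paper uses), the $16$ candidates fall into exactly $8$ orbits, not $7$; the paper gets down to $7$ by ruling out $\ep\equiv 1$ directly, observing that then every column of $S$ would be a rearrangement of another, while the column of Frobenius--Perron dimensions of $\tilde S$ is strictly positive, contradicting orthogonality of $\tilde S$. Since your route to $7$ depends entirely on the (incorrectly computed) outer-coset mergers and omits this elimination, the bound of $7$ does not follow from your argument. To repair it, either add the paper's elimination of the trivial sign choice and drop the outer coset, or redo the outer-coset orbit computation with the correct transformation law $\ep_{\sigma^{-1}}=\ep_\sigma\circ\sigma^{-1}$.
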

\begin{proof}
We have at first $2^6$ sign choices. Since the rank is even, we have that $\ep_0\ep_1\ep_2\ = \ep_3\ep_4\ep_5$. Since only products $\ep_i\ep_j$ appear in the matrix, we may  choose $\ep_0 = 1$. These constraints leave us with 16 possibilities. They are as follows (we only list either the positive or the negative signs, for brevity.) 
\begin{align*}
    &0)~ \ep_i = 1 &&\textbf{1)}~ \ep_0 = \ep_1 = 1 &&&\textbf{2)}~ \ep_0 = \ep_2  = 1&&&&\textbf{3)} ~\ep_0 = \ep_3  = 1\\
      &4)~ \ep_0 = \ep_4 = 1 &&5)~ \ep_0 = \ep_5 = 1 &&&\textbf{6)}~ \ep_1 = \ep_5  = -1 &&&&\textbf{7)}~ \ep_2 = \ep_5  = -1\\
       &\textbf{8)}~ \ep_3 = \ep_5 = -1 &&9)~ \ep_4 = \ep_5 = -1 &&&10)~ \ep_1 = \ep_4  = -1 &&&&11)~ \ep_2 = \ep_4  = -1\\
        &12)~ \ep_3 = \ep_4 = -1 &&13)~ \ep_1 = \ep_3 = -1 &&&14)~ \ep_2 = \ep_3  = -1&&&&\textbf{15)}~ \ep_1 = \ep_2  = -1\\
\end{align*}
The cyclic relabeling $(345)$ respects the action of the Galois group. Applying this relabeling to the matrices given by the above sign choices, we see that cases 3,4,5 are equivalent, as are cases 6,10, 13, cases 8,9,12, and cases 7, 11, 14. A chosen representative from each equivalence class for the purposes of computation is bolded. Sign choice 0 can be eliminated by noticing that for every column in $S$ there is another column that has the same entries rearranged.  The column of FP dimensions in $\tilde{S}$ will be all positive, so that $\tilde{S}$ is not orthogonal in this case. 
\end{proof}
\begin{lem} \label{nondegen}Suppose $\rho$ is a representation of $\SL(2, \Z)$ defined by modular data of a rank 6, self-dual MTC with $\Gal(\mC) = \langle(012)(345)\rangle$. Then $\rho$ is irreducible. 
\end{lem}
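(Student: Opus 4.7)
The plan is to assume for contradiction that $\rho$ is reducible and derive a contradiction. Since both $S/D$ and $\gamma T$ are unitary, $\rho$ is a unitary representation of $\SL(2, \Z)$, and reducibility is equivalent to the existence of a non-scalar matrix $P \in M_6(\C)$ commuting with both $S$ and $T$.

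The first step is to constrain the $T$-eigenvalue multiplicities via the Galois symmetry $\sigma^2(t_i) = t_{\sigma(i)}$. For $\sigma = (012)(345)$, iteration shows that $\{t_0, t_1, t_2\}$ and $\{t_3, t_4, t_5\}$ are each single Galois orbits under $\langle \sigma \rangle$. Since $\sigma$ has order three, each orbit has size $1$ or $3$; a size-$1$ orbit is Galois-fixed, hence rational, hence $\pm 1$. If both orbits degenerate to points, then $T$ has rational entries, so $\F_T = \Q$ and admissibility condition (vi) gives $\F_S = \Q$, forcing $\Gal(\mC) = \{1\}$ -- a contradiction. So at least one orbit -- say $\{t_0, t_1, t_2\}$ -- has three distinct values.

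In the generic subcase that all six $t_i$ are distinct, $P$ commuting with $T$ is itself diagonal, $P = \Diag(p_0, \ldots, p_5)$, and $[P, S] = 0$ reads $S_{ij}(p_i - p_j) = 0$. Taking $\ell$ to be the index of the Perron-Frobenius column of $\tilde{S}$ (strictly positive, as used in the proof of Lemma \ref{relabel}), column $\ell$ of $S$ has no zero entries, so $p_j = p_\ell$ for all $j$ and $P$ is scalar -- a contradiction.

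The hard subcase is $t_3 = t_4 = t_5 = \epsilon \in \{\pm 1\}$, producing a three-dimensional $T$-eigenspace on $\Span(e_3, e_4, e_5)$. Any commuting $P$ then has block form $P = \Diag(p_0, p_1, p_2, A)$ with $A \in M_3(\C)$; writing $S = \begin{pmatrix} B & C \\ C^T & E' \end{pmatrix}$ in matching blocks, $[P, S] = 0$ yields $\Diag(p_0, p_1, p_2)\, C = CA$, so row $i$ of $C$ is a left eigenvector of $A$ with eigenvalue $p_i$. The FP column argument again forces $p_0 = p_1 = p_2 =: p$, so all three rows of $C$ lie in a common left eigenspace of $A$. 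Row $0$ of $C$ is $(d_3, d_4, d_5)$ with no zero entries, and by the sign-choice formula in part (c) of the Galois symmetry theorem above, rows $1$ and $2$ of $C$ are (signed) cyclic permutations of row $0$ and hence also nonvanishing. The main technical obstacle is showing these three rows are linearly independent, which would force $A = pI$ and $P$ scalar. I would attack this using the relation $BC + CE' = 0$ derived from $S^2 = D^2 I$ combined with the explicit sign constraints, or, as a last resort, verify the rank condition directly in each of the seven sign-choice representatives of Lemma \ref{relabel}.
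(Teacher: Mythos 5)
Your overall framework (a unitary representation is irreducible iff the commutant of $\{S,T\}$ is scalar, and the Perron--Frobenius column of $\tilde S$ kills any diagonal commutant) is sound, and in fact it is essentially the content of the lemma of Eholzer that the paper invokes to reduce irreducibility to \emph{nondegeneracy of the $t$-spectrum}. But your case analysis of the possible degeneracies is incomplete, and the one nontrivial case you do isolate is not actually resolved. First, the Galois symmetry $\sigma^2(t_i)=t_{\sigma(i)}$ propagates a single coincidence $\theta_i=\theta_j$ into one of \emph{five} patterns, not two: besides the collapse of $\{\theta_0,\theta_1,\theta_2\}$ (which forces all three to equal $\theta_0=1$) and the collapse of $\{\theta_3,\theta_4,\theta_5\}$, there are three cross-orbit matchings such as $\theta_0=\theta_3,\ \theta_1=\theta_4,\ \theta_2=\theta_5$, in which $T$ has three eigenvalues each of multiplicity two. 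Your dichotomy (``all six distinct'' versus ``$t_3=t_4=t_5$'') omits these entirely, and they account for the bulk of the paper's work (cases 3--5 of the Gr\"obner computations, run for each of the seven sign choices). Second, your claim that a constant orbit forces $t_3=t_4=t_5=\pm1$ is unjustified: Galois symmetry only shows the common value is fixed by the \emph{squares} $\sigma^2$, which places it in a multiquadratic field, not in $\Q$; indeed the paper's Case 2 does not assume $\theta_3=\pm1$ and must be eliminated by computation.

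More fundamentally, the ``hard subcase'' is where the lemma actually lives, and you leave it as an acknowledged obstacle (linear independence of the three rows of $C$) with proposed lines of attack rather than an argument. There is no reason to expect $S^2=D^2I$ together with the sign-choice constraints to suffice: the paper eliminates each degeneracy pattern only by combining orthogonality with the twist (balancing) relations of admissibility condition (iv), feeding all of these into Gr\"obner basis computations for each of the seven sign-choice representatives of Lemma \ref{relabel}, and several cases close only after extracting and re-adding nonzero-factor relations. Your ``last resort'' of checking the seven sign choices directly is, in effect, the paper's proof --- but it must also be carried out for the three cross-orbit degeneracy patterns you omitted. As it stands the proposal is a correct reduction plus a plan, not a proof.
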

\begin{proof} By (\cite{EHR},Lemma 1), it suffices to show that $\rho$ has nondegenerate $t$-spectra. We use the Galois symmetries of both the $S$ and $T$ matrices to reduce this problem to thirty-five Gr\"obner basis computations, which are included in Section \ref{Grobner}.  By the preceding lemma, the final list of sign choices to be checked for nondegeneracy by Gr\"obner basis algorithm is  $1,2,3,6,7,8,15$. 
There is some symmetry to exploit before beginning the calculations, grouping the types of degeneracies that could potentially occur into 5 cases. These are: 
\begin{enumerate}
    \item $1 = \theta_0 = \theta_1 = \theta_2$
    \item $\theta_3 = \theta_4 = \theta_5$
    \item $\theta_0 = \theta_3, \theta_1 = \theta_4, \theta_2 = \theta_5$
    \item $\theta_0 = \theta_4, \theta_1 = \theta_5, \theta_2 = \theta_3$
    \item $\theta_0 = \theta_5, \theta_1 = \theta_3, \theta_2 = \theta_4$
\end{enumerate} 
We see this as follows: Suppose $\theta_i = \theta_j$. Then $t_i = t_j$, and $t_{\sigma(i)} = \sigma^2(t_i) = \sigma^2(t_j) = t_{\sigma(j)}$ by Galois symmetry, implying $\theta_{\sigma(i)} = \theta_{\sigma(j)}$ Repeating this process gives a third equation. The five cases are thus the two cases where both $i$ and $j$ are both in $\{0,1,2\}$ or $\{3,4,5\}$, and the three cases where $i \in \{0,1,2\}$ and $j \in \{3,4,5\}$. 
The  Gr\"obner basis computations included in Section \ref{Grobner} show that in all 5 cases we have a contradiction, so the spectra is nondegenerate.
\end{proof}
\begin{lem} \label{factor} Suppose $\mathcal{C}$ is a non-integral modular fusion category of rank 6, with $K(\mathcal{C}) \cong K_1 \otimes K_2$ with $K_i$ nontrivial. Then $\mathcal{C} \cong B_1 \boxtimes B_2$ for $B_i$ modular.
\end{lem}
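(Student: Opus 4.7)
The plan is to exhibit $B_1, B_2$ as fusion subcategories of $\mathcal{C}$ with Grothendieck rings $K_1, K_2$, prove that $B_1$ is a modular subcategory, and then invoke M\"uger's decomposition theorem, which gives $\mathcal{C} \cong B_1 \boxtimes B_1'$; a dimension count then identifies $B_1'$ with $B_2$.

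First I would index the simples of $\mathcal{C}$ by pairs $(i_1, i_2) \in I_1 \times I_2$ compatibly with the isomorphism $K(\mathcal{C}) \cong K_1 \otimes K_2$, so that the fusion matrices factor as Kronecker products $N^\mathcal{C}_{(i_1, i_2)} = N^{K_1}_{i_1} \otimes N^{K_2}_{i_2}$. The subcategories $B_1, B_2$ generated by the simples $\{(i_1, 0) : i_1 \in I_1\}$ and $\{(0, i_2) : i_2 \in I_2\}$ are then fusion subcategories of $\mathcal{C}$ with Grothendieck rings $K_1$ and $K_2$, respectively, since each $K_j$ is closed under duals and contains the unit.

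The key step is modularity of $B_1$. Since the commuting matrices $N^\mathcal{C}_{(i_1,i_2)}$ are tensor products with simple joint spectrum (the latter because $\tilde{S}^\mathcal{C}$ is invertible), the diagonalizing matrix $\tilde{S}^\mathcal{C}$ itself factors as a Kronecker product $\tilde{S}^1 \otimes \tilde{S}^2$ of invertible character-table matrices for $K_1, K_2$, normalized so that the $0$-row of each $\tilde{S}^j$ is all $1$'s and the $0$-column records the FP dimensions of $K_j$. Restricting $S^\mathcal{C}$ to the index set of $B_1$ then yields the matrix with entries $d^1_{j_1} \tilde{S}^1_{i_1 j_1}$, which is non-degenerate since $\tilde{S}^1$ is invertible and the $d^1_{j_1}$ are nonzero. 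By the standard criterion for modular subcategories, $B_1$ is modular, and M\"uger's decomposition theorem gives $\mathcal{C} \cong B_1 \boxtimes B_1'$ with $B_1'$ also modular.

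To identify $B_1'$ with $B_2$, I would use $\tilde{S}^1_{i_1, 0} = d^1_{i_1}$ and $\tilde{S}^2_{0, j_2} = 1$ to compute $S^\mathcal{C}_{(i_1, 0),(0, j_2)} = d^1_{i_1} d^2_{j_2} = d_{(i_1,0)}\, d_{(0,j_2)}$, which shows that $B_2$ centralizes $B_1$ object-by-object, hence $B_2 \subseteq B_1'$. The equality $\FPdim(B_2) = \FPdim(K_2) = \FPdim(\mathcal{C})/\FPdim(K_1) = \FPdim(B_1')$ upgrades this inclusion to $B_2 = B_1'$, so $\mathcal{C} \cong B_1 \boxtimes B_2$ with both factors modular. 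The main subtlety to verify carefully is the Kronecker factorization of $\tilde{S}^\mathcal{C}$: this reduces to the observation that simultaneous eigenvectors of a pair of commuting tensor-product families are, up to scaling forced by the simple joint spectrum, tensor products of eigenvectors of each factor family; once this is established, everything else is index bookkeeping.
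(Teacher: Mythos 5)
Your overall strategy---exhibit $B_1$ as a modular subcategory and apply M\"uger's decomposition theorem---cannot work as stated, because it uses neither the rank hypothesis nor non-integrality, and the paper's own remark records a counterexample to the unconditional claim: the toric code has $K(\mC)\cong K(\Rep(\Z_2))\otimes K(\Rep(\Z_2))$, yet neither rank-$2$ fusion subcategory is modular (the restricted $S$-matrix on $\{\one,e\}$ is the all-ones matrix). The precise gap is in your ``key step.'' It is true that the commuting family $\{N_{(i_1,i_2)}\}$ has simple joint spectrum and that its simultaneous eigenvectors are tensor products, so $\tilde S^{\mC}$ is a Kronecker product \emph{up to a permutation of its columns}. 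But the columns of $\tilde S^{\mC}$ carry a canonical labeling by simple objects (coming from the symmetry of $S$), and you have silently assumed that this labeling agrees with the product labeling, i.e.\ that the column indexed by $(j_1,0)$ is $v_{j_1}\otimes w_{0}$ with $w_0$ the Frobenius--Perron character of $K_2$. In the toric code the column labeled by $e=(1,0)$ is $v_{+}\otimes w_{-}$: the bijection between objects and characters is twisted across the two factors. Consequently both of your computations---the restriction of $S$ to $B_1\times B_1$ being $d^1_{j_1}\tilde S^1_{i_1j_1}$, and $S_{(i_1,0),(0,j_2)}=d^1_{i_1}d^2_{j_2}$ (which you use to show $B_2$ centralizes $B_1$)---fail in general.

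Any correct proof must use the hypotheses you left untouched. The paper's argument runs as follows: take $B_1$ of rank $2$ and $B_2$ of rank $3$; if either is modular, M\"uger's theorem finishes the job. If $B_1$ is \emph{not} modular, then by Ostrik's classification of rank-$2$ fusion categories it has the dimensions of $\Rep(\Z_2)$, hence is integral; $B_2$ cannot be symmetric (Deligne would force it, and hence $\mC$, to be integral), so Ostrik's rank-$3$ premodular classification leaves only $\Rep(S_3)$ or Ising fusion rules, the former is excluded again by non-integrality of $\mC$, and every premodular category with Ising fusion rules is modular. You would need to replace your spectral argument with a case analysis of this kind (or otherwise genuinely invoke non-integrality) for the lemma to hold.
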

\begin{remark}
The analogue of this lemma is not true in rank 4, with the toric code forming a counterexample (The fusion rules are a product, but the category is not. See \cite{RSW} 5.3.8.), so at least one of the rank/non-integrality assumptions is necessary.  
\end{remark}
\begin{proof}
WLOG, Let $K_1$ have rank 2, and $K_2$ rank 3. Let $B_1$ and $B_2$ be the associated full fusion subcategories associated with $K_1$ and $K_2$. If either $B_1$ or $B_2$ is modular, \cite{M}(Theorem 4.2) we have the result. So we assume $B_1$ not modular and show $B_2$ is. If $B_1$ is not modular, the simple objects have the same dimension as $\Rep(\Z_2)$, and are thus integral(\cite{O1}, 2.4). We may assume that $B_2$ is not symmetric since by \cite{D} this implies integrality of $B_2$ and thus $\mC$. Then \cite{O2} gives that the fusion rules are either Ising or $\Rep(S_3)$. We eliminate $\Rep(S_3)$ since it again implies integrality of $B_2$. But all MTC's with Ising fusion rules are modular by \cite{DGNO}(Appendix B), so $B_2$ is modular. 
\end{proof}

\begin{theorem} Up to relabeling, and Galois conjugation, the only realizable modular data for rank 6, non-integral, self dual, MTC's with Galois group $\langle(012)(345)\rangle$ are given by the following two families of $(S, T)$. 
    $$S = \begin{bmatrix} 1 & 1 \\ 1 & -1 \end{bmatrix} \otimes \begin{bmatrix} 1 & d & d^2 - 1 \\ d & -(d^2 - 1) & 1 \\ d^2 - 1 & 1 & -d \end{bmatrix} ~~,~~T = \begin{bmatrix} 1 & \\ & i \end{bmatrix} \otimes \begin{bmatrix}1 & & \\& e^{2\pi i /7}&\\&& e^{10\pi i /7}\end{bmatrix}$$
    where $d = 2\cos{(\pi/7)}$ and 
    $$S =
\begin{bmatrix}
 1 & -1 & 1 & r_1 & r_2 & r_3 \\
 -1 & 1 & -1 & -r_2 & -r_3 & -r_1 \\
 1 & -1 & 1 & r_3 & r_1 & r_2 \\
 r_1 & -r_2 & r_3 & 1 & 1 & 1 \\
 r_2 & -r_3 & r_1 & 1 & 1 & 1 \\
 r_3 & -r_1 & r_2 & 1 & 1 & 1 \\
\end{bmatrix}$$$$T = \begin{bmatrix}
 1 \\ & e^{2\pi i/3} \\ && e^{4\pi i/3} \\ &&& e^{-4\pi i /9}\\ &&&& e^{8\pi i /9} \\ &&&&& e^{2\pi i /9} \end{bmatrix}$$
where with $\alpha = e^{i\pi/9}$ we set $r_1 = -\alpha - \alpha^2 + \alpha^5, r_2 = \alpha + \alpha^2 - \alpha^4$ and $r_3 = \alpha^4 - \alpha^5$.
\end{theorem}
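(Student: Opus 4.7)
The plan is to combine the three preceding lemmas and then perform a case-by-case Gr\"obner basis analysis in the same spirit as the proof of Lemma \ref{nondegen}. By Lemma \ref{relabel}, it suffices to examine the seven bolded sign choices $\{1,2,3,6,7,8,15\}$. By Lemma \ref{nondegen}, the modular representation of $\SL(2,\Z)$ is irreducible, so in particular the ribbon twists $\theta_0,\ldots,\theta_5$ are pairwise distinct. Combined with the Galois symmetry $\sigma^2(t_i)=t_{\sigma(i)}$, this restricts the diagonal of $T$ to a tightly constrained finite list of $N$-th roots of unity for each candidate level $N$.

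For each surviving sign choice, I would parameterize the $S$-matrix: the Galois action $(012)(345)$ together with the sign symmetry from Theorem 2.5 reduces $S$ to at most three unknown entries (say $d_1, d_3, d_4$, with $d_0=1$). Imposing orthogonality $SS^t=D^2 I$, the Verlinde formula $N_{ij}^k \in \N$, positivity of Frobenius--Perron dimensions, and the cubic identity $(ST)^3 = p_+ S^2$, one obtains a polynomial system over $\Q$ that a Gr\"obner basis algorithm solves explicitly. The admissibility conditions (i)--(vii) and the second Frobenius--Schur indicator constraint prune the remaining spurious solutions.

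Once the surviving fusion rings are in hand, they split into two classes: those whose Grothendieck ring factors as $K_1 \otimes K_2$ with $K_i$ of ranks $2$ and $3$, and those that do not. By Lemma \ref{factor}, the factoring cases give $\mC \cong B_1 \boxtimes B_2$ with both $B_i$ modular, so the classifications in \cite{RSW} for ranks $\leq 4$ apply: the only product consistent with $\Gal(\mC) = \langle(012)(345)\rangle$ is the semion category tensored with $(A_1,5)_{1/2}$ (or a Galois conjugate), which yields the first family in the statement. The non-factoring cases can be matched directly against the candidate rank-$6$ subcategory of $\CC(\so_5,9,e^{j\pi i/9})$ by inspection of the ninth-root-of-unity eigenvalues of $T$ and the entries $r_1, r_2, r_3$ of $S$.

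The main obstacle will be the bookkeeping for the seven Gr\"obner basis computations: ensuring that every branch of solutions is accounted for, eliminating the spurious branches that fail positivity of some $N_{ij}^k$ or the $\nu_2$-indicator condition, and then identifying the surviving families with the two claimed ones \emph{up to Galois conjugation and relabeling}. In particular, matching the non-product cases to the $\so_5$ subcategory requires careful comparison of column orderings and of the Galois conjugate modular data arising from the various admissible $j$ with $\gcd(18,j)=1$.
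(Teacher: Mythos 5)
Your proposal diverges from the paper's argument in a way that leaves a genuine gap: you never explain how the order $N$ of the $T$-matrix, or equivalently the fusion rules, get pinned down. The paper's entire strategy hinges on using the irreducibility from Lemma \ref{nondegen} not merely to conclude the twists are distinct, but to invoke the classification of irreducible congruence representations of $\SL(2,\Z)$ (Eholzer's tables, together with \cite{BNRW2} Proposition 3.13 forcing $7\mid N$ or $9\mid N$). That step is what produces a \emph{finite, explicit} list of candidate fusion algebras --- the unique $3$-dimensional irreducible through $\SL(2,\Z/7\Z)$ in one branch, and the type $B_9$ fusion rules in the other --- before any Gr\"obner computation is attempted. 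Your plan instead throws orthogonality, $(ST)^3=p_+S^2$, positivity, and ``$N_{ij}^k\in\N$'' into a polynomial system over the unknown entries of $S$ and the unknown $\theta_i$. But with $N$ unbounded the $\theta_i$ range over roots of unity of arbitrary order, so the system is not a finite computation, and the integrality and nonnegativity of the Verlinde coefficients are not polynomial conditions that a Gr\"obner basis can enforce; you would be reduced to an unbounded enumeration. The paper's Gr\"obner computations only become tractable because they are run either to derive contradictions from degenerate $t$-spectra (Lemma \ref{nondegen}) or to solve for $T$ \emph{after} $S$ is completely known and $D^2=9$, $N\mid D^5$ have fixed the setting.

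Two smaller issues: your claim that the Galois and sign symmetries reduce $S$ to ``at most three unknown entries'' is too optimistic (the paper's own computations carry $d_1,\dots,d_5$ and $S_{33},S_{34},S_{35}$ as unknowns); and in the $7\mid N$ branch you need an argument that the Grothendieck ring actually factors before Lemma \ref{factor} applies --- in the paper this comes from Lemma 3 of \cite{EHR}, i.e.\ from the tensor-product structure of the $6$-dimensional congruence representation, not from inspecting the output of a computation. To repair your proof you would need to reinstate the representation-theoretic step (congruence subgroup property plus Eholzer's classification) as the mechanism that determines the fusion rules and bounds $N$; the remaining bookkeeping you describe (sign choice $15$, the $\nu_2$ indicator pruning, and matching to the $\CC(\so_5,9,e^{j\pi i/9})$ subcategories) is then essentially what the paper does.
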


\begin{proof}
By \ref{nondegen}, the representation of $\SL(2, \Z)$ coming from $\mC$ is irreducible, so that the representation is (up to a character) either the tensor product of two irreducible representations of dimensions 2 and 3, or a 6 dimensional irreducible representation. We know either 7 or $9$ divides N by \cite{BNRW2}, Proposition 3.13.
\begin{itemize}[leftmargin=*]\item If $7|N$, the representation factors non-trivially through $\SL(2, \Z/7\Z)$ and Table 12 in \cite{EHR} indicates there is only one such irreducible representation, with dimension 3. Lemma 3 from \cite{EHR} and Lemma \ref{factor} show that $\mC$ factors, and the previous classification of rank 2 and 3 MTC's \cite{RSW}(Theorem 3.2) implies that the first pair of modular data is the only solution. The realization is as the product of the semion category and $(A_1, 5)_{\frac{1}{2}}$. These categories are defined in \cite{RSW} in sections 5.3.1 and 5.3.6, respectively.
\item If $9|N$ then the only representation is of type $B_9$, up to a (real) character. This fusion algebra is known to be related to quantum groups coming from $\so$ which fits with the construction for the categories in this case. The notation is from \cite{EHR} and seems to also appear in the physics literature. We relabel the fusion rules given in \cite{EHR}(Appendix B)  by the permutation $(14)(253)$ and give the fusion matrix $N_4$ (Eholzer's $N_1$) as:
$$\left(
\begin{array}{cccccc}
 0 & 0 & 0 & 0 & 1 & 0 \\
 0 & 1 & 1 & 1 & 0 & 1 \\
 0 & 1 & 1 & 0 & 1 & 0 \\
 0 & 1 & 0 & 1 & 0 & 0 \\
 1 & 0 & 1 & 0 & 0 & 1 \\
 0 & 1 & 0 & 0 & 1 & 0 \\
\end{array}
\right)$$
This determines the algebra since the $N_i$ commute. We can then compute the characteristic polynomials for the $N_i$, $n_i(x)$. While in principle the fusion matrix $N_1$ determines the others, if one happens to have the $S$-matrix for a MTC with given fusion rules, (see \cite{R}) it is much easier to use the Verlinde formula to compute the $N_i$.
\begin{align*}
    n_0(x) &= (x - 1)^6 \\ n_1(x) &= (x + 1)^3(x^3 - 6x^2 + 3x + 1)\\  
    n_2(x) &= (x - 1)^3(x^3 - 3x^2 - 6x - 1)\\
    n_3(x) = n_4(x) = n_5(x) &= (x^3 - 3x  + 1)(x^3 - 3x^2 + 1)
\end{align*}
Let $q_1\ q_2, q_3$ be the  irrational roots of $n_1(x)$ and $t_1, t_2, t_3$ be the irrational roots of $n_2(x)$. Let $r_1, r_2, r_3$ be roots of one factor of $n_3(x)$. Note that the roots of the other factor are the inverses of the $r_i$. and permuted the same way. Now, we will construct the $\tilde{S}$ matrix, row by row, and show by contradiction that the second row must be $(-1,-1,-1, q_1, q_2, q_3)$. By the Galois action, we know the only other possibilities for the first three rows of  $\tilde{S}$ are 
$$\begin{bmatrix}
1 & 1 & 1 & 1 & 1 & 1 \\
q_1 & q_2 & q_3 & -1 & -1 & -1 \\
1 & 1 & 1 & t_1 & t_2 & t_3 \\
        \end{bmatrix} \quad \text{and} \quad \begin{bmatrix}1 & 1 & 1 & 1 & 1 & 1 \\
q_1 & q_2 & q_3 & -1 & -1 & -1 \\
t_1 & t_2 &t_3 & 1 & 1 & 1 \\
        \end{bmatrix}$$ In the first case we see that $S_{12} = q_1 = \pm 1$, an immediate contradiction. In the second, we get that $q_1 = \pm t_2^{-1}$, which implies that the minimial polynomial of $q_1$ is either a factor of $n_2$ or obtained by changing signs on these factors, a contradiction. Likewise, if we choose the third row to begin with the $t_i$, we get that $-t_2 = \pm1$. Now we may solve for the top left 3x3 submatrix of $S$. If two rows start with the same ordering of the $r_i$, the inner product of the first two columns isn't a symmetric function of the roots, hence not $3$ and the $S$ matrix isn't orthogonal. Now we may observe the Galois group action on $\tilde{S}$ is respected only if the $S$ has the following form: 
        $$\begin{bmatrix}
 1 & -1 & 1 & r_1 & r_2 & r_3 \\
 -1 & 1 & -1 & -r_2 & -r_3 & -r_1 \\
 1 & -1 & 1 & r_3 & r_1 & r_2 \\
 r_1 & -r_2 & r_3 \\
 r_2 & -r_3 & r_1 \\
 r_3 & -r_1 & r_2 \\
\end{bmatrix}$$ 
This tells us a few things. $r_1r_2 + r_2r_3 + r_1r_2 = -3$, which means that the minimal polynomial for the $r_i$ is $x^3 - 3x + 1$, so that $r_1 + r_2 + r_3 = 0$. Taking the inner product with the top three rows gives the following system of equations,
\begin{align*}
    r_1S_{33} + r_2S_{34} + r_3S_{35} &= 0 \\
    r_2S_{33} + r_3S_{34} + r_1S_{35} &= 0 \\
    r_3S_{33} + r_1S_{34} + r_2S_{35} &= 0 
\end{align*}
which solve to yield $S_{33} = S_{34} = S_{35}$. Then $\sigma(S_{33}/r_1) = S_{33}/r_2$, and we can conclude that all the $S_{ij}$ are 1, which gives the stated solution set for $S$. Computing the norm of the first column gives $D^2 = 9$. By \cite{ETF}(Theorem 5.1), we know that $N|D^5$ which implies the character from the beginning of the proof is trivial. The $T$ matrix is determined by use of Gr\"obner Bases, see the end of Section \ref{Grobner}. We get the relations:  $$\theta_2^2 + \theta_2 + 1, \theta_1 + \theta_2 + 1,\theta_3 + \theta_4 + \theta_5, \theta_2\theta_4 - \theta_5, x^2 + x + 1, \theta_5^2 + x+\theta_5 - d_5$$, where $x = (\theta_1\theta_2\theta_3\theta_4\theta_5)^{-1}$, included to force the $\theta_i$ nonzero in the calculations. Including the equations with $x$ also aids computer algebra systems in enumerating all twelve solutions. At this point we may calculate the second Frobenius-Schur indicators $\nu_2(k)$ for each label $k$ in all twelve solutions, 96 in total. Since all particles are self-dual, we require $\nu_2(k) = \pm 1$ for all $k$. The only two solutions are the given solution and its complex conjugate. This gives us a total of six $T$-spectra up to relabeling, all of which are Galois conjugates of each other.
Modular categories with the remaining 6 $T$-spectra are constructed in \cite{R} as subcategories of $\CC(\so_5, 9, e^{j\pi i/9})$ with gcd$(18, j) = 1$ and are thus realizeable. This completes the classification.

\end{itemize}
\end{proof}

\section{Acknowledgements}
The author would like to thank Dr.\ Eric Rowell for supervising this project during the 2018 Mathematics REU at Texas A\&M university, as well as Daniel Creamer for providing helpful explanations and direction. 

\section{Future Work} Related open problems include the complete classification of rank 6 MTC's by Galois group, as well as MTC's of higher rank. In the classification process, it seemed that the symmetries of sign choice 15 alone implied non-unitarizablilty of any MTC realizing them. Additionally, Lemma 3.3 is expected to hold in higher generality, and investigating such a claim might help in the classification of higher rank categories. Finally, in the $7|N$ case of the classification we know all of the actual categories that realize the modular data. Is it possible to determine the categories in the $9|N$ case? 

\section{Gr\"obner Basis Calculations} \label{Grobner}
The algorithm used to find contradictions in modular data with degenerate $t$-spectra is essentially unchanged from \cite{C}. For each sign choice, we initialize the ideal to be generated by the orthogonality and twist relations, as well the relation $d_1d_2d_3d_4d_5k - 1= 0$ to force dimensions nonzero. Then we add the sign relations from the case. After each calculation of a Gr\"obner basis, we search the factored output for new relations to add to the ideal, and recalculate the basis. As an example, if the polynomial $\theta_3(d_1 - 1)$ occurs in the output, we would add the relation $d_1 - 1$ to the ideal and rerun, since $\theta_3 \ne 0$. If the resulting ideal is the unit ideal, we mark 1 in the ``Factored Polynomials" column and continue to the next case. 
All the computations were performed on a Dell XPS 9560, using Macaulay2 version 1.1. It is possible to instruct Macaulay to compute a basis for only the relations of degree less than $n$. This speeds up computations drastically, and we include $n$ (which may change during the computation) in the data given. The first set of computations are from Lemma \ref{nondegen}, and the last one is the relations on the $T$ matrix from the end of the classification. 

Creamer noted in \cite{C} that when the Galois group was a subgroup of one that is realizable, it was harder to eliminate it. Likewise, when the sign choice is one actually occurs, it's harder to show that the $t$-spectra is nondegenerate. Compare sign choice 2 and sign choice 15. 

\underline{\textit{Calculations for Lemma \ref{nondegen}:}} We process the 7 sign choices in increasing numerical order, first eliminating the case $\theta_1 = \theta_2 = 1$ (case 1), and then eliminate the $\theta_3 = \theta_4 = \theta_5$ case (case 2) while assuming $\theta_1$ and $\theta_2$ are both distinct and not 1. Then, we finish the remaining three cases, which often reduce to case 1 or 2. For reference, the cases are: 
\begin{enumerate}
    \item $1 = \theta_0 = \theta_1 = \theta_2$
    \item $\theta_3 = \theta_4 = \theta_5$
    \item $\theta_0 = \theta_3, \theta_1 = \theta_4, \theta_2 = \theta_5$
    \item $\theta_0 = \theta_4, \theta_1 = \theta_5, \theta_2 = \theta_3$
    \item $\theta_0 = \theta_5, \theta_1 = \theta_3, \theta_2 = \theta_4$
\end{enumerate} 

The contradictions obtained require the use of the following:
\begin{itemize}
    \item $D, p,  \theta_i$ are all nonzero.
    \item $\Q(S) \subset \Q(T)$
    \item If $\theta_{\sigma(i)} = \theta_i$, then $\theta_i = \theta_{\sigma(i)} = \theta_{\sigma^2(i)}$
\end{itemize}
We will use the following without comment:
\begin{prop} In case 1, 
$\theta_4, \theta_5 \not \in \{\pm 1\}$.  
\end{prop}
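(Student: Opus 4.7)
The plan is to use the Galois symmetry of the $T$-matrix to show that $\theta_4\in\{\pm1\}$ (or $\theta_5\in\{\pm1\}$) would force every twist $\theta_i$ to be a sign, and then to derive a contradiction from the Frobenius--Schur indicator formula in the admissibility axiom (v).

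First I would observe that, since $\theta_0=\theta_1=1$ in case 1, we have $t_0=t_1=\gamma$, so Galois symmetry $\sigma^2(t_0)=t_{\sigma(0)}=t_1$ immediately yields $\sigma^2(\gamma)=\gamma$. Now suppose $\theta_4=\epsilon\in\{\pm1\}$, so $t_4=\epsilon\gamma$. Because $\epsilon\in\Q$ is fixed by every Galois automorphism, $\sigma^2(t_4)=\epsilon\,\sigma^2(\gamma)=\epsilon\gamma$; but $\sigma^2(t_4)=t_{\sigma(4)}=t_5$, forcing $\theta_5=\epsilon$. Iterating once more, $\sigma^2(t_5)=t_3$ forces $\theta_3=\epsilon$. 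Combined with the case 1 hypothesis this shows $\theta_i^2=1$ for every $i$.

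With $\theta_i^2=1$ across the board, the admissibility formula simplifies to $\nu_2(k)=\frac{1}{D^2}\sum_{i,j}N_{i,j}^k d_id_j$. I would then apply the Frobenius-reciprocity identity $\sum_j N_{i,j}^k d_j=d_id_k$ (which follows from $N_{i,j}^k=N_{i^*,k}^j$ together with $d_{i^*}=d_i$) to reduce the right-hand side to $(d_k/D^2)\sum_i d_i^2=d_k$. Since $\mC$ is self-dual, admissibility forces $\nu_2(k)\in\{\pm1\}$ for every $k$, so $d_k\in\{\pm1\}\subset\Z$ throughout. This makes $\mC$ integral, contradicting the standing non-integrality assumption. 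The argument for $\theta_5$ is identical after reindexing within the $\sigma$-orbit $\{3,4,5\}$, so no separate analysis is required.

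No step is computationally difficult; the only subtlety is the implicit lift of $\sigma\in\Gal(\F_S/\Q)$ to $\Gal(\F_T/\Q)$ needed to make sense of $\sigma^2(\gamma)$, which is legitimate because $\F_S\subset\F_T=\Q_N$ by admissibility (vi) and because both the twists $\theta_i$ and the scalar $\gamma$ live in $\F_T$. The whole proposition is really a two-line Galois bookkeeping argument combined with the standard collapse of $\nu_2$ to $d_k$ when all twists have order dividing $2$.
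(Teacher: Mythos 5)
Your argument is correct, and its first half is exactly the paper's: Galois symmetry $\sigma^2(t_i)=t_{\sigma(i)}$ propagates a sign twist around the orbit $\{3,4,5\}$ (the paper phrases this with the ratios $t_4/t_5$, $t_5/t_3$ rather than via $\sigma^2(\gamma)=\gamma$, which sidesteps the lift-of-$\sigma$ subtlety you flag, but it is the same step), so that all six $\theta_i$ land in $\{\pm 1\}$. Where you diverge is the final contradiction. The paper finishes in one line: if every $\theta_i=\pm1$ then $T$ has order dividing $2$, so $\F_T=\Q_N=\Q$, and admissibility (vi) forces $\F_S\subset\Q$, contradicting the nontriviality of $\Gal(\mC)=\langle(012)(345)\rangle$. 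You instead observe that $(\theta_i/\theta_j)^2\equiv 1$ collapses $\nu_2(k)$ to $\tfrac{1}{D^2}\sum_{i,j}N_{i,j}^k d_i d_j=d_k$ via $\sum_j N_{i,j}^k d_j=d_id_k$, so self-duality and axiom (v) force $d_k=\pm1$ for all $k$, contradicting nonintegrality; the computation is right and the contradiction is legitimate under the standing hypotheses of Section 3. Your route costs more work (the Frobenius reciprocity identity and the $\nu_2$ reduction) but extracts a stronger intermediate conclusion (the category would have to be pointed) and leans only on nonintegrality rather than on the field-containment axiom; the paper's route is the more economical one and is more in keeping with the Galois-theoretic bookkeeping used throughout the rest of the argument. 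Both are valid proofs of the proposition.
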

\begin{proof}
Suppose otherwise. In this case $t_4/t_5 = \pm 1$, so that $\pm 1 = \sigma^2(t_4/t_5) = t_5/t_3$ and thus $\theta_3 = \pm 1$. But then $\F_S \subset \F_T = \Q$, a contradiction. 
\end{proof}
\noindent\textit{Sign Choice 1, Case 1:} Initial ideal generated by twist relations, orthogonality, $\theta_1 - 1, \theta_2 - 1$.
\begin{center}
\begin{tabular}{|c|c|c|}\hline
 \text{Degree Limit}
     & \text{Factored Polynomials} 
     & \text{Zero Factors Added}
     \\
     \hline
     9 & $p(d_3 + d_4 + d_5)D$ & $d_3 + d_4 + d_5$ \\
     \hline 
     9 & $D^4(\theta_5^2 - 1),D^4(\theta_4^2 - 1)$ & \\
     \hline 
\end{tabular}
\end{center}
\noindent\textit{Sign Choice 1, Case 2:} Initial ideal generated by twist relations, orthogonality, $\theta_3 - \theta_4, \theta_4 - \theta_5$.
\begin{center}
\begin{tabular}{|c|c|c|}\hline
 \text{Degree Limit}
     & \text{Factored Polynomials} 
     & \text{Zero Factors Added}
     \\
     \hline
     8 & $p(\theta_5 + 1)(\theta_1 - \theta_2)$ & $\theta_5 + 1$ \\
     \hline 
     8 & $D(\theta_1 + \theta_2), (p + 2)D$ & $p+2, \theta_1 + \theta_2$ \\ \hline 8 & 1 &  \\
     \hline
\end{tabular}
\end{center}
~\\
\noindent\textit{Sign Choice 1, Case 3:} Initial ideal generated by twist relations, orthogonality, $\theta_3 - 1, \theta_4 - \theta_1, \theta_5 - \theta_2$.
\begin{center}
\begin{tabular}{|c|p{1.75in}|p{1.75in}|}\hline
 \text{Degree Limit}
     & \text{Factored Polynomials} 
     & \text{Zero Factors Added}
     \\
     \hline
     7 & $p(S_{35}\theta_4 + S_{34}\theta_5 + S_{33})$ \newline$p(d_5\theta_4 + d_4\theta_5 + d_3)$\newline $p(d_2\theta_4 + d_1\theta_5 - 1)$ & $S_{35}\theta_4 + S_{34}\theta_5 + S_{33}$\newline $d_5\theta_4 + d_4\theta_5 + d_3$ \newline$d_2\theta_4 + d_1\theta_5 - 1$ \\
     \hline 
     7 &  $(\theta_4 + \theta_5)d_3D^2$ \newline $(\theta_5)(\theta_5 +1)d_5D^2$ & $\theta_4 + \theta_5$ \newline $\theta_5 + 1$ \\ \hline
\end{tabular}
\end{center}
We now have $\theta_4 = \theta_1 = 1$, so we reduce to case 1 and are done. 
~\\
\noindent\textit{Sign Choice 1, Case 4:} Initial ideal generated by twist relations, orthogonality, $\theta_3 - \theta_2, \theta_4 - 1, \theta_5 - \theta_1$.
\begin{center}
\begin{tabular}{|c|p{1.75in}|p{1.75in}|}\hline
 \text{Degree Limit}
     & \text{Factored Polynomials} 
     & \text{Zero Factors Added}
     \\
     \hline
     7 & $p(S_{33}\theta_3 + S_{34}\theta_5 + S_{35})$ \newline$p(d_5\theta_3 + d_3\theta_5 + d_4)$\newline $p(d_1\theta_3 + d_2\theta_5 - 1)$ & $S_{33}\theta_3 + S_{34}\theta_5 + S_{35}$\newline $d_5\theta_3 + d_3\theta_5 + d_4$ \newline$d_1\theta_3 + d_2\theta_5 - 1$ \\
     \hline 
     7 &  $(\theta_3 + \theta_5)d_4D^2$ \newline $(\theta_5)(\theta_5 +1)d_5D^2$ & $\theta_3 + \theta_5$ \newline $\theta_5 + 1$ \\ \hline
\end{tabular}
\end{center}
This gives $\theta_3 = \theta_2 = 1$, so we reduce to case 1 and are done. 
~\\
\noindent\textit{Sign Choice 1, Case 5:} Initial ideal generated by twist relations, orthogonality, $\theta_3 - \theta_1, \theta_4 - \theta_2, \theta_5 - 1$.
\begin{center}
\begin{tabular}{|c|p{1.75in}|p{1.75in}|}\hline
 \text{Degree Limit}
     & \text{Factored Polynomials} 
     & \text{Zero Factors Added}
     \\
     \hline
     7 & $p(S_{33}\theta_3 + S_{35}\theta_4 + S_{34})$ \newline$p(d_4\theta_3 + d_3\theta_4 + d_5)$\newline $p(d_2\theta_3 + d_1\theta_4 - 1)$ & $S_{33}\theta_3 + S_{35}\theta_4 + S_{34}$\newline $d_4\theta_3 + d_3\theta_4 + d_5$ \newline$d_2\theta_3 + d_1\theta_4 - 1$ \\
     \hline 
     7 &  $(\theta_3 + \theta_4)d_5D^2$ \newline $(\theta_4)(\theta_4 +1)d_4D^2$ & $\theta_3 + \theta_4$ \newline $\theta_4 + 1$ \\ \hline
\end{tabular}
\end{center}
At this point, we have $\theta_3 = \theta_1 = 1$, so we reduce to case 1 and are done. 
~\\
\noindent\textit{Sign Choice 2:} All five cases immediately returned the unit ideal. Degree limits used were 9 for cases 1-2, and 7 for 3-5. 
~\\
\textit{Sign Choice 3, Case 1:} Initial ideal generated by twist relations, orthogonality, $\theta_1 - 1, \theta_2 - 1$.
\begin{center}
\begin{tabular}{|c|c|c|}\hline
 \text{Degree Limit}
     & \text{Factored Polynomials} 
     & \text{Zero Factors Added}
     \\
     \hline
     9 & $p(d_3 - d_4 + d_5)D$ & $d_3 -d_4 + d_5$ \\
    \hline 
    9 & $D^2(p^2 - D^2), D^4(\theta_3 + \theta_4 + \theta_5 + 1)$ & $p^2 - D^2, \theta_3 + \theta_4 + \theta_5 + 1$ \\ 
     \hline 
     9 & $D^4(\theta_5^2 - 1),D^4(\theta_4^2 - 1)$ & \\
     \hline 
\end{tabular}
\end{center}
~\\
\textit{Sign Choice 3, Case 2:} Initial ideal generated by twist relations, orthogonality, $\theta_3 - \theta_4, \theta_4 - \theta_5$.
\begin{center}
\begin{tabular}{|c|c|c|}\hline
 \text{Degree Limit}
     & \text{Factored Polynomials} 
     & \text{Zero Factors Added}
     \\
     \hline
     8 & $(\theta_5 + 1)(\theta_1 - \theta_2)pD$ & $\theta_5 + 1$ \\
     \hline
     9 & $(\theta_2 - 1)^3p$ & \\
     \hline
\end{tabular}
\end{center}
~\\
\noindent\textit{Sign Choice 3, Case 3:} Initial ideal generated by twist relations, orthogonality, $\theta_3 - 1, \theta_4 - \theta_1, \theta_5 - \theta_2$.
\begin{center}
\begin{tabular}{|c|p{1.75in}|p{1.75in}|}\hline
 \text{Degree Limit}
     & \text{Factored Polynomials} 
     & \text{Zero Factors Added}
     \\
     \hline
     7 & 
     $p(S_{35}\theta_4 - S_{34}\theta_5 + S_{33})$ 
     \newline$p(d_5\theta_4 - d_4\theta_5 + d_3)$
     \newline $p(d_2\theta_4 - d_1\theta_5 + 1)$ 
     & $S_{35}\theta_4 - S_{34}\theta_5 + S_{33}$
     \newline $d_5\theta_4 - d_4\theta_5 + d_3$ 
     \newline$d_2\theta_4 - d_1\theta_5 + 1$ \\
     \hline 
     7 &  $(\theta_4 + \theta_5)d_3D^2$ \newline $(\theta_5)(\theta_5 +1)d_5D^2$ & $\theta_4 + \theta_5$ \newline $\theta_5 + 1$ \\ \hline
\end{tabular}
\end{center}
We get $\theta_4 = \theta_1 = 1$, so we reduce to case 1 and are done. 
~\\
\noindent\textit{Sign Choice 3, Case 4:} Initial ideal generated by twist relations, orthogonality, $\theta_3 - \theta_2, \theta_4 - 1, \theta_5 - \theta_1$.
\begin{center}
\begin{tabular}{|c|p{1.75in}|p{1.75in}|}\hline
 \text{Degree Limit}
     & \text{Factored Polynomials} 
     & \text{Zero Factors Added}
     \\
     \hline
     7 & $p(S_{33}\theta_3 - S_{34}\theta_5 + S_{35})$ \newline$p(d_5\theta_3 + d_3\theta_5 - d_4)$\newline $p(d_1\theta_3 - d_2\theta_5 - 1)$ & $S_{33}\theta_3 - S_{34}\theta_5 + S_{35}$\newline $d_5\theta_3 + d_3\theta_5 - d_4$ \newline$d_1\theta_3 - d_2\theta_5 - 1$ \\
     \hline 
     7 &  $(\theta_3 + \theta_5)d_4D^2$ \newline $(\theta_5)(\theta_5 +1)d_5D^2$ & $\theta_3 + \theta_5$ \newline $\theta_5 + 1$ \\ \hline
\end{tabular}
\end{center}
The relation $\theta_3 = \theta_2 = 1$ reduces us to case 1 and are done. 
~\\
\noindent\textit{Sign Choice 3, Case 5:} Initial ideal generated by twist relations, orthogonality, $\theta_3 - \theta_1, \theta_4 - \theta_2, \theta_5 - 1$.
\begin{center}
\begin{tabular}{|c|p{1.75in}|p{1.75in}|}\hline
 \text{Degree Limit}
     & \text{Factored Polynomials} 
     & \text{Zero Factors Added}
     \\
     \hline
     7 & $p(S_{33}\theta_3 + S_{35}\theta_4 - S_{34})$ 
     \newline$p(d_4\theta_3 - d_3\theta_4 - d_5)$
     \newline $p(d_2\theta_3 - d_1\theta_4 + 1)$ 
     & $S_{33}\theta_3 + S_{35}\theta_4 - S_{34}$
     \newline $d_4\theta_3 - d_3\theta_4 - d_5$ \newline$dd_2\theta_3 - d_1\theta_4 + 1$ \\
     \hline 
     7 &  $(\theta_3 + \theta_4)d_5D^2$ \newline $(\theta_4)(\theta_4 +1)d_5D^2$ & $\theta_3 + \theta_5$ \newline $\theta_5 + 1$ \\ \hline
\end{tabular}
\end{center}
We can conclude $\theta_3 = \theta_1 = 1$, so we reduce to case 1 and are done. 
~\\
\textit{Sign Choice 6, Case 1:} Initial ideal generated by twist relations, orthogonality, $\theta_1 - 1, \theta_2 - 1$.
\begin{center}
\begin{tabular}{|c|c|c|}\hline
 \text{Degree Limit}
     & \text{Factored Polynomials} 
     & \text{Zero Factors Added}
     \\
     \hline
     9 & $p(d_3 - d_4 -d_5)D$ & $d_3 -d_4 - d_5$ \\
    \hline 
     9 & $D^4(\theta_5^2 - 1),D^4(\theta_4^2 - 1)$ & \\
     \hline 
\end{tabular}
\end{center}
~\\
\textit{Sign Choice 6, Case 2:} Identical to sign choice 3, case 2.
~\\
\noindent\textit{Sign Choice 15, Case 3:} Initial ideal generated by twist relations, orthogonality, $\theta_3 - 1, \theta_4 - \theta_1, \theta_5 - \theta_2$.
\begin{center}
\begin{tabular}{|c|p{1.75in}|p{1.75in}|}\hline
 \text{Degree Limit}
     & \text{Factored Polynomials} 
     & \text{Zero Factors Added}
     \\
     \hline
     7 & $p(S_{35}\theta_4 + S_{34}\theta_5 - S_{33})$ 
     \newline$p(d_5\theta_4 + d_4\theta_5 - d_3)$
     \newline $p(d_2\theta_4 - d_1\theta_5 - 1)$ 
     & $S_{35}\theta_4 + S_{34}\theta_5 - S_{33}$
     \newline $d_5\theta_4 + d_4\theta_5 - d_3$ 
     \newline$d_2\theta_4 - d_1\theta_5 - 1$ \\
     \hline 
     7 &  $(\theta_4 + \theta_5)d_3D^2$ 
     \newline $p(\theta_5 +1)d_5D^2$ 
     & $\theta_4 + \theta_5$ \newline $\theta_5 + 1$ \\ \hline
\end{tabular}
\end{center}
So $\theta_4 = \theta_1 = 1$, and we reduce to case 1 and are done. 
~\\
\noindent\textit{Sign Choice 15, Case 4:} Initial ideal generated by twist relations, orthogonality, $\theta_3 - \theta_2, \theta_4 - 1, \theta_5 - \theta_1$.
\begin{center}
\begin{tabular}{|c|p{1.75in}|p{1.75in}|}\hline
 \text{Degree Limit}
     & \text{Factored Polynomials} 
     & \text{Zero Factors Added}
     \\
     \hline
     7 & $p(S_{33}\theta_3 - S_{34}\theta_5 - S_{35})$
     \newline$p(d_5\theta_3 - d_3\theta_5 + d_4)$
     \newline $p(d_1\theta_3 - d_2\theta_5 + 1)$ 
     & $S_{33}\theta_3 - S_{34}\theta_5 - S_{35}$
     \newline $d_5\theta_3 - d_3\theta_5 + d_4$ 
     \newline$d_1\theta_3 - d_2\theta_5 + 1$ \\
     \hline 
     7 &  $(\theta_3 + \theta_5)d_4D^2$ \newline $(\theta_5)(\theta_5 +1)d_5D^2$ & $\theta_3 + \theta_5$ \newline $\theta_5 + 1$ \\ \hline
\end{tabular}
\end{center}
We now have $\theta_3 = \theta_2 = 1$, so we reduce to case 1 and are done. 
~\\
\noindent\textit{Sign Choice 15, Case 5:} Initial ideal generated by twist relations, orthogonality, $\theta_3 - \theta_1, \theta_4 - \theta_2, \theta_5 - 1$.
\begin{center}
\begin{tabular}{|c|p{1.75in}|p{1.75in}|}\hline
 \text{Degree Limit}
     & \text{Factored Polynomials} 
     & \text{Zero Factors Added}
     \\
     \hline
     7 & $p(S_{33}\theta_3 - S_{35}\theta_4 - S_{34})$ 
     \newline$p(d_4\theta_3 - d_3\theta_4 + d_5)$
     \newline $p(d_2\theta_3 - d_1\theta_4 - 1)$ 
     & $S_{33}\theta_3 - S_{35}\theta_4 - S_{34}$
     \newline $d_4\theta_3 - d_3\theta_4 + d_5$ 
     \newline$d_2\theta_3 - d_1\theta_4 - 1$ \\
     \hline 
     7 &  $(\theta_3 + \theta_4)d_5D^2$ \newline $(\theta_4)(\theta_4 +1)d_4D^2$ & $\theta_3 + \theta_4$ \newline $\theta_4 + 1$ \\ \hline
\end{tabular}
\end{center}
At this point, we have $\theta_3 = \theta_1 = 1$, so we reduce to case 1 and are done. 
~\\
\noindent\textit{Sign Choice 7, Case 1:} Identical to case 6
\noindent\textit{Sign Choice 7, Case 2:} Identical to case 6
~\\
~\\
\noindent\textit{Sign Choice 7, Case 3:} Initial ideal generated by twist relations, orthogonality, $\theta_3 - 1, \theta_4 - \theta_1, \theta_5 - \theta_2$.
\begin{center}
\begin{tabular}{|c|p{1.75in}|p{1.75in}|}\hline
 \text{Degree Limit}
     & \text{Factored Polynomials} 
     & \text{Zero Factors Added}
     \\
     \hline
     7 & $p(S_{35}\theta_4 + S_{34}\theta_5 - S_{33})$ 
     \newline$p(d_5\theta_4 + d_4\theta_5 - d_3)$
     \newline $p(d_2\theta_4 + d_1\theta_5 - 1)$ 
     & $S_{35}\theta_4 + S_{34}\theta_5 - S_{33}$
     \newline $d_5\theta_4 + d_4\theta_5 - d_3$ 
     \newline$d_2\theta_4 + d_1\theta_5 - 1$ \\
     \hline 
     7 &  $(\theta_4 + \theta_5)d_3D^2$ 
     \newline $p(\theta_5 +1)d_5D^2$ 
     & $\theta_4 + \theta_5$ \newline $\theta_5 + 1$ \\ \hline
\end{tabular}
\end{center}
At this point, we have $\theta_4 = \theta_1 = 1$, so we reduce to case 1 and are done. 
~\\
\noindent\textit{Sign Choice 15, Case 4:} Initial ideal generated by twist relations, orthogonality, $\theta_3 - \theta_2, \theta_4 - 1, \theta_5 - \theta_1$.
\begin{center}
\begin{tabular}{|c|p{1.75in}|p{1.75in}|}\hline
 \text{Degree Limit}
     & \text{Factored Polynomials} 
     & \text{Zero Factors Added}
     \\
     \hline
     7 & $p(S_{33}\theta_3 - S_{34}\theta_5 - S_{35})$
     \newline$p(d_5\theta_3 - d_3\theta_5 + d_4)$
     \newline $p(d_1\theta_3 + d_2\theta_5 - 1)$ 
     & $S_{33}\theta_3 - S_{34}\theta_5 - S_{35}$
     \newline $d_5\theta_3 - d_3\theta_5 + d_4$ 
     \newline$d_1\theta_3 + d_2\theta_5 - 1$ \\
     \hline 
     7 &  $(\theta_3 + \theta_5)d_4D^2$ \newline $(\theta_5)(\theta_5 +1)d_5D^2$ & $\theta_3 + \theta_5$ \newline $\theta_5 + 1$ \\ \hline
\end{tabular}
\end{center}
At this point, we have $\theta_3 = \theta_2 = 1$, so we reduce to case 1 and are done. 
~\\
\noindent\textit{Sign Choice 15, Case 5:} Initial ideal generated by twist relations, orthogonality, $\theta_3 - \theta_1, \theta_4 - \theta_2, \theta_5 - 1$.
\begin{center}
\begin{tabular}{|c|p{1.75in}|p{1.75in}|}\hline
 \text{Degree Limit}
     & \text{Factored Polynomials} 
     & \text{Zero Factors Added}
     \\
     \hline
     7 & $p(S_{33}\theta_3 - S_{35}\theta_4 - S_{34})$ 
     \newline$p(d_4\theta_3 - d_3\theta_4 + d_5)$
     \newline $p(d_2\theta_3 + d_1\theta_4 - 1)$ 
     & $S_{33}\theta_3 - S_{35}\theta_4 - S_{34}$
     \newline $d_4\theta_3 - d_3\theta_4 + d_5$ 
     \newline$d_2\theta_3 + d_1\theta_4 - 1$ \\
     \hline 
     7 &  $(\theta_3 + \theta_4)d_5D^2$ \newline $(\theta_4)(\theta_4 +1)d_4D^2$ & $\theta_3 + \theta_4$ \newline $\theta_4 + 1$ \\ \hline
\end{tabular}
\end{center}
Thus $\theta_3 = \theta_1 = 1$, so we reduce to case 1 and are done. 
~\\
\noindent\textit{Sign Choice 8, Case 1:} Identical to sign choice 6, case 1
~\\
\noindent\textit{Sign Choice 8, Case 2:} Identical to sign choice 6, case 2
~\\
\noindent\textit{Sign Choice 8, Case 3:} Initial ideal generated by twist relations, orthogonality, $\theta_3 - 1, \theta_4 - \theta_1, \theta_5 - \theta_2$.
\begin{center}
\begin{tabular}{|c|p{1.75in}|p{1.75in}|}\hline
 \text{Degree Limit}
     & \text{Factored Polynomials} 
     & \text{Zero Factors Added}
     \\
     \hline
     7 & $p(S_{35}\theta_4 - S_{34}\theta_5 - S_{33})$ 
     \newline$p(d_5\theta_4 - d_4\theta_5 - d_3)$
     \newline $p(d_2\theta_4 + d_1\theta_5 + 1)$ 
     & $S_{35}\theta_4 - S_{34}\theta_5 - S_{33}$
     \newline $d_5\theta_4 - d_4\theta_5 - d_3$ 
     \newline$d_2\theta_4 + d_1\theta_5 + 1$ \\
     \hline 
     7 &  $(\theta_4 + \theta_5)d_3D^2$ 
     \newline $(\theta_5)(\theta_5 +1)d_5D^2$ 
     & $\theta_4 + \theta_5$ \newline $\theta_5 + 1$ \\ \hline
\end{tabular}
\end{center}
At this point, we have $\theta_4 = \theta_1 = 1$, so we reduce to case 1 and are done. 
~\\
\noindent\textit{Sign Choice 8, Case 4:} Initial ideal generated by twist relations, orthogonality, $\theta_3 - \theta_2, \theta_4 - 1, \theta_5 - \theta_1$.
\begin{center}
\begin{tabular}{|c|p{1.75in}|p{1.75in}|}\hline
 \text{Degree Limit}
     & \text{Factored Polynomials} 
     & \text{Zero Factors Added}
     \\
     \hline
     7 & $p(S_{33}\theta_3 + S_{34}\theta_5 - S_{35})$
     \newline$p(d_5\theta_3 - d_3\theta_5 - d_4)$
     \newline $p(d_1\theta_3 + d_2\theta_5 + 1)$ 
     & $S_{33}\theta_3 + S_{34}\theta_5 - S_{35}$
     \newline $d_5\theta_3 - d_3\theta_5 - d_4$ 
     \newline$d_1\theta_3 + d_2\theta_5 + 1$ \\
     \hline 
     7 &  $(\theta_3 + \theta_5)d_4D^2$ \newline $(\theta_5)(\theta_5 +1)d_5D^2$ & $\theta_3 + \theta_5$ \newline $\theta_5 + 1$ \\ \hline
\end{tabular}
\end{center}
This gives $\theta_3 = \theta_2 = 1$, so we reduce to case 1 and are done. 
~\\
\noindent\textit{Sign Choice 8, Case 5:} Initial ideal generated by twist relations, orthogonality, $\theta_3 - \theta_1, \theta_4 - \theta_2, \theta_5 - 1$.
\begin{center}
\begin{tabular}{|c|p{1.75in}|p{1.75in}|}\hline
 \text{Degree Limit}
     & \text{Factored Polynomials} 
     & \text{Zero Factors Added}
     \\
     \hline
     7 & $p(S_{33}\theta_3 - S_{35}\theta_4 + S_{34})$ 
     \newline$p(d_4\theta_3 + d_3\theta_4 - d_5)$
     \newline $p(d_2\theta_3 + d_1\theta_4 + 1)$ 
     & $S_{33}\theta_3 - S_{35}\theta_4 + S_{34}$
     \newline $d_4\theta_3 + d_3\theta_4 - d_5$ 
     \newline$d_2\theta_3 + d_1\theta_4 + 1$ \\
     \hline 
     7 &  $(\theta_3 + \theta_4)d_5D^2$ \newline $(\theta_4)(\theta_4 +1)d_4D^2$ & $\theta_3 + \theta_4$ \newline $\theta_4 + 1$ \\ \hline
\end{tabular}
\end{center}
At this point, we have $\theta_3 = \theta_1 = 1$, so we reduce to case 1 and are done. 
~\\
 \textit{Sign Choice 15, Case 1:} Initial ideal generated by twist relations, orthogonality, $\theta_1 - 1, \theta_2 - 1$.
\begin{center}
\begin{tabular}{|c|c|c|}\hline
 \text{Degree Limit}
     & \text{Factored Polynomials} 
     & \text{Zero Factors Added}
     \\
     \hline
     8 & $p(d_3 + d_4 + d_5)D$ & $d_3 +d_4 + d_5$ \\
    \hline 
     9 & $D^4(\theta_5^2 - 1),D^4(\theta_4^2 - 1)$ & \\
     \hline 
\end{tabular}
\end{center}
~\\
\noindent\textit{Sign Choice 15, Case 2:} Identical to sign choice 6, case 2
~\\
\noindent\textit{Sign Choice 15, Case 3:} Initial ideal generated by twist relations, orthogonality, $\theta_3 - 1, \theta_4 - \theta_1, \theta_5 - \theta_2$.
\begin{center}
\begin{tabular}{|c|p{1.75in}|p{1.75in}|}\hline
 \text{Degree Limit}
     & \text{Factored Polynomials} 
     & \text{Zero Factors Added}
     \\
     \hline
     7 & $p(S_{35}\theta_4 + S_{34}\theta_5 + S_{33})$ 
     \newline$p(d_5\theta_4 + d_4\theta_5 + d_3)$
     \newline $p(d_2\theta_4 - d_1\theta_5 + 1)$ 
     & $S_{35}\theta_4 + S_{34}\theta_5 + S_{33}$
     \newline $d_5\theta_4 + d_4\theta_5 + d_3$ 
     \newline$d_2\theta_4 - d_1\theta_5 + 1$ \\
     \hline 
     7 &  $(\theta_4 + \theta_5)d_3D^2$ 
     \newline $p(\theta_5 +1)d_5D^2$ 
     & $\theta_4 + \theta_5$ \newline $\theta_5 + 1$ \\ \hline
\end{tabular}
\end{center}
We get $\theta_4 = \theta_1 = 1$, reduce to case 1, and are done. 
~\\
\noindent\textit{Sign Choice 15, Case 4:} Initial ideal generated by twist relations, orthogonality, $\theta_3 - \theta_2, \theta_4 - 1, \theta_5 - \theta_1$.
\begin{center}
\begin{tabular}{|c|p{1.75in}|p{1.75in}|}\hline
 \text{Degree Limit}
     & \text{Factored Polynomials} 
     & \text{Zero Factors Added}
     \\
     \hline
     7 & $p(S_{33}\theta_3 + S_{34}\theta_5 + S_{35})$
     \newline$p(d_5\theta_3 + d_3\theta_5 + d_4)$
     \newline $p(d_1\theta_3 - d_2\theta_5 - 1)$ 
     & $S_{33}\theta_3 + S_{34}\theta_5 + S_{35}$
     \newline $d_5\theta_3 + d_3\theta_5 + d_4$ 
     \newline$d_1\theta_3 - d_2\theta_5 - 1$ \\
     \hline 
     7 &  $(\theta_3 + \theta_5)d_4D^2$ \newline $(\theta_5)(\theta_5 +1)d_5D^2$ & $\theta_3 + \theta_5$ \newline $\theta_5 + 1$ \\ \hline
\end{tabular}
\end{center}
We recover $\theta_3 = \theta_2 = 1$, so we reduce to case 1 and are done. 
~\\
\noindent\textit{Sign Choice 15, Case 5:} Initial ideal generated by twist relations, orthogonality, $\theta_3 - \theta_1, \theta_4 - \theta_2, \theta_5 - 1$.
\begin{center}
\begin{tabular}{|c|p{1.75in}|p{1.75in}|}\hline
 \text{Degree Limit}
     & \text{Factored Polynomials} 
     & \text{Zero Factors Added}
     \\
     \hline
     7 & $p(S_{33}\theta_3 + S_{35}\theta_4 + S_{34})$ 
     \newline$p(d_4\theta_3 + d_3\theta_4 + d_5)$
     \newline $p(d_2\theta_3 - d_1\theta_4 + 1)$ 
     & $S_{33}\theta_3 + S_{35}\theta_4 + S_{34}$
     \newline $d_4\theta_3 + d_3\theta_4 + d_5$ 
     \newline$d_2\theta_3 - d_1\theta_4 + 1$ \\
     \hline 
     7 &  $(\theta_3 + \theta_4)d_5D^2$ \newline $(\theta_4)(\theta_4 +1)d_4D^2$ & $\theta_3 + \theta_4$ \newline $\theta_4 + 1$ \\ \hline
\end{tabular}
\end{center}
So $\theta_3 = \theta_1 = 1$, we reduce to case 1 and are done. 
~\\
\underline{\textit{Calculation of T-matrix: }} Since we know the $S$ matrix (which corresponds to sign choice 15), we simply run the algorithm ($n = 20$) with initial ideal generated by the twist and orthogonality relations,the minimal polynomials for the $d_i$ and $S_{ij}$, and the equation $\theta_1\theta_2\theta_3\theta_4\theta_5x - 1 = 0$ This gives the following relations: $$\theta_2^2 + \theta_2 + 1, \theta_1 + \theta_2 + 1,\theta_3 + \theta_4 + \theta_5, \theta_2\theta_4 - \theta_5, x^2 + x + 1, \theta_5^2 + x+\theta_5 - d_5$$, which is sufficient to finish the classification.


\begin{thebibliography}{AA}
\bibitem{BGNPRW} P.\ Bruillard, S.\ H.\ Ng, E.\ Rowell, J.\ Plavnik, and Z.\ Wang, \emph{On the classification of weakly integral modular categories}, J. Pure. Appl. Algebra (2014), 220-226
\bibitem{BNRW1}P.\ Bruillard, S.\ H.\ Ng, E.\ Rowell, and Z.\ Wang, \emph{Rank-finiteness for modular categories}, J. Amer. Math. Soc. \textbf{29} (2016), 857--881
\bibitem{BNRW2}P.\ Bruillard, S.\ H.\ Ng, E.\ Rowell, and Z.\ Wang, \emph{On classification of modular categories by rank}, Int. Math. Res. Notices (2016)
\bibitem{C} D.\ Creamer, \emph{A computational approach to classifying modular tensor categories}, Texas A\&M University, 2018.
\bibitem{D} P.\ Deligne, \emph{Cat´egories tensorielles}, Moscow Math. Journal 2 (2002) no. 2, 227-248
\bibitem{DGNO} V.\ Drinfeld, S.\ Gelaki, D.\ Nikshych, V.\ Ostrik, \emph{On Braided Fusion Categories I},Selecta Math., New Ser. 16 (1) (2010) 1-119.
\bibitem{EHR} W.\ Eholzer, \emph{On the classification of modular fusion algebras} Comm. Math. Phys. \textbf{172} (1995), no.3 623--659.
\bibitem{ETF} P.\ Etingof, \emph{On Vafa’s theorem for tensor categories}. Mathematical Research Letters \textbf{9}, no 5, 651--657.
\bibitem{M} M.\ M\"uger, \emph{On the structure of modular categories.} Proc. London Math. Soc. (3)  87  (2003),  no. 2, 291--308.
 \bibitem{O1} V.\ Ostrik, \emph{Fusion categories of rank 2}, Math. Res. Lett. 10 (2003), no. 2-3, 177–-183.
 \bibitem{O2} \underline{\quad\quad}, \emph{Pre-modular categories of rank 3}, Mosc. Math. J. 8 (2003), no. 1, 111–-118. 
 \bibitem{R} E.\ C. \ Rowell. \emph{From quantum groups to unitary modular tensor categories}
 , Contemp. Math., (2006), 413, 215--230.
\bibitem{RSW} E.\ C.\ Rowell, R.\ Stong, and Z.\ Wang.  \emph{On Classification of Modular Tensor Categories},  Commun. Math. Phys. (2009) 292--343. 
\bibitem{S} A.\ Schopieray, \emph{Classification of $\mathfrak{sl}_3$ Relations in the Witt Group of Nondegenerate Braided Fusion Categories}, Comm. Math. Phys. \textbf{353} (2017),no. 3, 1103-1127

\end{thebibliography}
\end{document}